\documentclass[11pt]{article}

\usepackage[margin=1.3in]{geometry}

\usepackage{amssymb,amsmath}
\usepackage{amsthm}
\usepackage{hyperref}
\usepackage{mathrsfs}
\usepackage{textcomp}
\hypersetup{
    colorlinks,%
    citecolor=black,%
    filecolor=black,%
    linkcolor=black,%
    urlcolor=black
}

\usepackage{verbatim}

\usepackage{sectsty}

\makeatletter

\newdimen\bibspace
\setlength\bibspace{0pt}   % change this place
\renewenvironment{thebibliography}[1]{%
 \section*{\refname %or \bibname if you use ``book'' as the documentclass
       \@mkboth{\MakeUppercase\refname}{\MakeUppercase\refname}}%
     \list{\@biblabel{\@arabic\c@enumiv}}%
          {\settowidth\labelwidth{\@biblabel{#1}}%
           \leftmargin\labelwidth
           \advance\leftmargin\labelsep
           \itemsep\bibspace
           \parsep\z@skip     %
           \@openbib@code
           \usecounter{enumiv}%
           \let\p@enumiv\@empty
           \renewcommand\theenumiv{\@arabic\c@enumiv}}%
     \sloppy\clubpenalty4000\widowpenalty4000%
     \sfcode`\.\@m}
    {\def\@noitemerr
      {\@latex@warning{Empty `thebibliography' environment}}%
     \endlist}

\makeatother

   %\addtolength{\bibsep}{-1ex}
%\usepackage{showkeys}
%\renewcommand{\normalsize}{\fontsize{12pt}{14pt}\selectfont}
\makeatletter

\newtheorem{thm}{Theorem}

\newtheorem{prop}[thm]{Proposition}

%\numberwithin{equation}{section}

\def\Xint#1{\mathchoice
  {\XXint\displaystyle\textstyle{#1}}%
  {\XXint\textstyle\scriptstyle{#1}}%
  {\XXint\scriptstyle\scriptscriptstyle{#1}}%
  {\XXint\scriptscriptstyle\scriptscriptstyle{#1}}%
  \!\int}
\def\XXint#1#2#3{{\setbox0=\hbox{$#1{#2#3}{\int}$}
  \vcenter{\hbox{$#2#3$}}\kern-.5\wd0}}

\def\dashint{\Xint-}

                \newcommand{\lda}{\lambda}
                
           \newcommand{\ud}{\mathrm{d}}
\newcommand{\be}{\begin{equation}}      \newcommand{\ee}{\end{equation}}

\newcommand{\R}{\mathbb{R}}              \newcommand{\Sn}{\mathbb{S}^n}

\begin{document}

\title{\textbf{A derivation of the sharp Moser-Trudinger-Onofri inequalities from the fractional Sobolev inequalities}
\bigskip}

\author{Jingang Xiong\footnote{Supported in part by NSFC 11501034, NSFC 11571019 and the key project NSFC 11631002.} }

%\date{\today}

\maketitle

\begin{abstract} We derive the sharp Moser-Trudinger-Onofri inequalities on the standard $n$-sphere and CR $(2n+1)$- sphere as the limit of the  sharp fractional Sobolev inequalities for all $n\ge 1$. On the $2$-sphere and $4$-sphere, this  was established recently by S.-Y. Chang and F. Wang. Our  proof  uses  an alternative and elementary argument.

\end{abstract}

\section{Introduction}

In \cite{O}, E. Onofri proved the sharp Moser-Trudinger inequality on the unit $2$-sphere
\[
\ln \dashint_{\mathbb{S}^2} e^{2w} \,\ud \mu_{g_0} \le \dashint_{\mathbb{S}^2}|\nabla w|^2 \,\ud \mu_{g_0} +2 \dashint_{\mathbb{S}^2}w \,\ud\mu_{g_0}  \quad \mbox{for }w\in W^{1,2}(\mathbb{S}^2),
\]
where $g_0$ is the standard metric and $\dashint_{\mathbb{S}^2} \,\ud \mu_{g_0} = \frac{1}{|\mathbb{S}^2 |} \int_{\mathbb{S}^2}  \,\ud \mu_{g_0}$. Onofri's proof is based on a version of the Moser-Trudinger inequality due to T. Aubin \cite{A}  which holds under the additional constraint $\dashint_{\mathbb{S}^2} e^{2w} x \,\ud \mu_{g_0}=0$, $x\in \R^{3}$; see C. Gui and A. Moradifam \cite{GM} for the proof of sharp form of Aubin's inequality which was conjectured by S.-Y. Chang and P. Yang \cite{CY}. Until now, there have been many different proofs of the Moser-Trudinger-Onofri inequality. A collection of them can be found in the survey J. Dolbeault, M. J. Esteban, and G. Jankowiak \cite{DEJ}. In \cite{R}, Y. Rubinstein gave a K\"ahler geometric proof of the sharp inequality and obtained an optimal extension of it to the higher dimensional K\"ahler-Einstein manifolds.  Rubinstein's proof is based on the earlier results of W. Ding and G. Tian \cite{DT} and G. Tian \cite{T}. On the standard $n$- sphere $\Sn$,  the Moser-Trudinger-Onofri inequality was established  by T. Branson, S.-Y. Chang and P. Yang \cite{BCY} and W. Beckner \cite{Be} for $n=4$, and by \cite{Be} for all $n\ge 1$.

Recently,  S.-Y. Chang and F. Wang \cite{CW} derived the sharp Moser-Trudinger-Onofri inequality on the $2$- and $4$- spheres as the limit case of the fractional power Sobolev inequalities, which was motivated by a dimensional continuation argument of T. Branson. The  proof  of  \cite{CW}
exploits the definition of the fractional order operators as generalized Dirichlet-to-Neumann operators from scattering theory, and uses the extension formula of the fractional order operators, which was first introduced  by  L. Caffarelli and L. Silvestre \cite{CaS} on the Euclidean spaces, and later generalized  to operators defined on the boundaries of asymptotically hyperbolic  manifolds by S.-Y. Chang and M. Gonz\'alez \cite{CG}, and J. Case and S.-Y. Chang \cite{CC}.  In the final remark of \cite{CW}, they commented that it is plausible that their arguments can be applied to other dimensions, but the arguments would become increasingly delicate when $n$ is large.

In this paper,  we derive the sharp Moser-Trudinger-Onofri inequality as the limit case of the fractional power Sobolev inequalities on $\Sn$ for all $n\ge 1$. Instead of using Chang-Wang's argument from scattering theory, our proof uses the explicit formulas of the fractional order operators on the spheres. Chang-Wang's method should have broader applications in related problems on manifolds.  On the dual side, E. Carlen and M. Loss \cite{CL} derived the sharp logarithmic Hardy-Littlewood-Sobolev inequality on $\Sn$ from the sharp HLS inequalities via endpoint differentiation, which in turn implies the sharp Moser-Trudinger-Onofri inequality.

Our argument works in the CR setting, too. In this situation, a sharp Moser-Trudinger-Onofri inequality on CR sphere $\mathbb{S}^{2n+1}$ was discovered by T. Branson, L. Fontana and C. Morpurgo \cite{BFM} after introducing the $\mathcal{A}_Q'$ operator of order $Q=2n+2$. On the other hand, R. Frank and E. Lieb \cite{FL} proved the sharp fractional Sobolev inequalities as a corollary of their sharp HLS inequalities. \cite{FL} also proved the limiting cases of HLS by differentiating  HLS at the endpoints; see Corollary 2.4 and Corollary 2.5.   We derive   the sharp Moser-Trudinger-Onofri inequality of \cite{BFM} as the limit of the sharp fractional Sobolev inequalities of \cite{FL} in a similar way.

In the next section, we extend \cite{CW} to all dimensions $n\ge 1$ by a different approach. In section 3, we prove the analogue in the CR spheres setting.

\bigskip

\noindent\textbf{Acknowledgments:}
The author is  grateful to Professor G. Tian for his kind advice on presentation and for his insightful comments.
He also thanks Professor R. Frank for clarifying the limiting process in the literature.

\medskip

\section{The standard spheres setting}

Let $n\ge 1$, $\Sn\subset \R^{n+1}$ be the unit $n$-dimensional sphere. For $\gamma>0$, let
\[
 P_\gamma=\frac{\Gamma(B+\frac{1}{2}+\gamma)}{\Gamma(B+\frac{1}{2}-\gamma)},\quad B=\sqrt{-\Delta_{g_0}+\left(\frac{n-1}{2}\right)^2},
\]
where $\Delta_{g_0}$ is the Laplace-Beltrami operator on $\Sn$ with respect to the standard induced metric $ g_0$ from $\R^{n+1}$. More precisely, $B$
and $P_\gamma$ are determined
by the formulas
\be\label{eigenvalue}
 B\Big(Y^{(k)}\Big)=\left(k+\frac{n-1}{2}\right)Y^{(k)}\quad\mbox{and}\quad P_\gamma\Big(Y^{(k)}\Big)=\frac{\Gamma(k+\frac{n}{2}+\gamma)}{\Gamma(k+\frac{n}{2}-\gamma)}Y^{(k)}
\ee
for every spherical harmonic $Y^{(k)}$ of degree $k\ge 0$, where $\Gamma(\cdot)$ is the Gamma function.
%Let
%\[
%P_{n/2}=\begin{cases}
%\prod_{k=0}^{\frac{n-2}{2}}(-\Delta_{g_0}+k(n-k-1)) \quad  &\mbox{for even }n,\\
%(-\Delta_{g_0}+(\frac{n-1}{2})^2)^{1/2} \prod_{k=0}^{\frac{n-3}{2}}(-\Delta_{g_0}+k(n-k-1)) \quad &\mbox{for odd }n.
%\end{cases}
%\]

Let $\gamma\in (0,n/2)$. The sharp Sobolev inequality on $\Sn$ asserts that
\be\label{eq:f-sobolev}
Y(n, \gamma)\left(\dashint_{\mathbb{S}^n}|v|^{\frac{2n}{n-2\gamma}}\,\ud \mu_{g_0} \right)^{\frac{n-2\gamma}{n}}\leq
\dashint_{\mathbb{S}^n}vP_{\gamma}(v)\,\ud \mu_{g_0} \quad \mbox{for }v\in C^\infty(\mathbb{S}^n),
\ee
where $Y(n, \gamma):=\frac{\Gamma(\frac{n}{2}+\gamma)}{\Gamma(\frac{n}{2}-\gamma)}$ and $\dashint_{\mathbb{S}^n} \,\ud \mu_{g_0} =\frac{1}{|\mathbb{S}^n|}\int_{\mathbb{S}^n}\,\ud \mu_{g_0} $. The sharp Moser-Trudinger-Onofri inequality asserts that
\be\label{eq:m-t}
\frac{2(n-1)!}{n}\ln \dashint_{\mathbb{S}^n} e^{nw} \,\ud \mu_{g_0}  \le  \dashint_{\mathbb{S}^n} \Big(w P_{n/2} w + 2(n-1)! w \Big) \,\ud \mu_{g_0}  \quad \mbox{for }w\in C^\infty(\mathbb{S}^n).
\ee
See W. Beckner \cite{Be} for the proofs of the both inequalities. In particular, \eqref{eq:f-sobolev} is a consequence of the sharp HLS inequality due to E. Lieb \cite{Lie83} in the Euclidean spaces.

Recently,  S.-Y. Chang and F. Wang \cite{CW} studied the limit of \eqref{eq:f-sobolev} when $n=2$ and $n=4$. Generalizing the cases  $n=2$ and $n=4$ from \cite{CW}, we have

\begin{prop}\label{prop:main} For $\gamma\in (0,n/2)$ and any $w\in C^{\infty}(\mathbb{S}^n)$, let $v= e^{(\frac{n}{2}-\gamma) w}$. Denote
\[
LHS_\gamma:=  \frac{4}{(n-2\gamma)^2} Y(n, \gamma)  \left[  \left( \dashint_{\mathbb{S}^n}|v|^{\frac{2n}{n-2\gamma}}\,\ud \mu_{g_0} \right)^{\frac{n-2\gamma}{n}}- \dashint_{\mathbb{S}^n}|v|^{2}\,\ud \mu_{g_0}  \right]
\]
and
\[
RHS_\gamma :=  \frac{4}{(n-2\gamma)^2}\left[ \dashint_{\mathbb{S}^n}vP_{\gamma}(v)\,\ud \mu_{g_0} -  Y(n, \gamma)\dashint_{\mathbb{S}^n}|v|^{2}\,\ud \mu_{g_0}   \right].
\]
Then
\be  \label{eq:left}
\lim_{\gamma\to n/2} LHS_\gamma= \frac{2(n-1)!}{n} \ln \dashint_{\mathbb{S}^n} e^{n(w-\bar w)} \,\ud \mu_{g_0}
\ee
and
\be \label{eq:right}
\lim_{\gamma\to n/2} RHS_\gamma= \dashint_{\mathbb{S}^n} w P_{n/2} w  \,\ud \mu_{g_0} ,
\ee
where $\bar w$ is the average of $w$ over $\Sn$.

\end{prop}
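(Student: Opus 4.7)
The plan is to set $\va := \tfrac{n}{2}-\gamma$, so that $v=e^{\va w}$, and to Taylor expand both $LHS_\gamma$ and $RHS_\gamma$ to order $\va^2$ as $\va\to 0^+$. Two ingredients are essential. First, the Gamma function identity $\va\,\Gamma(\va)=\Gamma(1+\va)\to 1$ gives the asymptotic
\[
 Y(n,\gamma)=\frac{\Gamma(n-\va)}{\Gamma(\va)}=(n-1)!\,\va+O(\va^2),\qquad \frac{4}{(n-2\gamma)^2}\,Y(n,\gamma)=\frac{(n-1)!}{\va}+O(1).
\]
Second, since $P_\gamma(1)=Y(n,\gamma)$ and $P_\gamma$ is self-adjoint on $L^2(\Sn,\ud\mu_{g_0})$, one has $\dashint P_\gamma(f)\,\ud\mu_{g_0}=Y(n,\gamma)\,\bar f$ for every smooth $f$.

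For \eqref{eq:left}, the elementary expansions $a^{2\va/n}=1+\tfrac{2\va}{n}\ln a+O(\va^2)$ (with $a=\dashint e^{nw}\,\ud\mu_{g_0}$) and $e^{2\va w}=1+2\va w+O(\va^2)$ give
\[
 \Big(\dashint e^{nw}\,\ud\mu_{g_0}\Big)^{2\va/n}-\dashint e^{2\va w}\,\ud\mu_{g_0}=\frac{2\va}{n}\Big[\ln\dashint e^{nw}\,\ud\mu_{g_0}-n\bar w\Big]+O(\va^2)=\frac{2\va}{n}\ln\dashint e^{n(w-\bar w)}\,\ud\mu_{g_0}+O(\va^2),
\]
and multiplication by $\tfrac{(n-1)!}{\va}+O(1)$ followed by $\va\to 0$ yields the stated limit.

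For \eqref{eq:right}, I expand $v=1+\va w+\tfrac{\va^2}{2}w^2+O(\va^3)$ and $P_\gamma(v)=Y(n,\gamma)+\va P_\gamma(w)+\tfrac{\va^2}{2}P_\gamma(w^2)+O(\va^3)$. Multiplying out $v P_\gamma(v)$, integrating, and using self-adjointness to collapse $\dashint P_\gamma(w)\,\ud\mu_{g_0}=Y(n,\gamma)\bar w$ and $\dashint P_\gamma(w^2)\,\ud\mu_{g_0}=Y(n,\gamma)\dashint w^2\,\ud\mu_{g_0}$, the $\va^0$ and $\va^1$ coefficients coincide with those of $Y(n,\gamma)\dashint v^2\,\ud\mu_{g_0}$, and the difference reduces to
\[
 \dashint v P_\gamma(v)\,\ud\mu_{g_0}-Y(n,\gamma)\dashint v^2\,\ud\mu_{g_0}=\va^2\Big[\dashint w P_\gamma(w)\,\ud\mu_{g_0}-Y(n,\gamma)\dashint w^2\,\ud\mu_{g_0}\Big]+O(\va^3).
\]
Dividing by $\va^2$ and sending $\va\to 0$ kills the second bracketed term (since $Y(n,\gamma)\to 0$) and leaves $\dashint w P_{n/2}(w)\,\ud\mu_{g_0}$; this last limit is justified by expanding $w=\sum_k w_k$ in spherical harmonics and applying dominated convergence, since the coefficients $\dashint w_k^2\,\ud\mu_{g_0}$ decay faster than any polynomial in $k$ while the eigenvalues $\frac{\Gamma(k+n/2+\gamma)}{\Gamma(k+n/2-\gamma)}$ grow only polynomially, uniformly for $\gamma$ near $n/2$.

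The main task is bookkeeping the Taylor remainders uniformly: one must verify that the $O(\va^3)$ errors above survive division by $\va^2$ in a topology compatible with the action of $P_\gamma$. Since $w$ is smooth and $P_\gamma$ acts continuously between fixed Sobolev spaces with norms varying continuously in $\gamma$ near $n/2$, this is routine rather than a conceptual obstacle; the conceptual content is entirely contained in the cancellation of the $\va^0$ and $\va^1$ coefficients, which reflects $P_{n/2}(1)=0$.
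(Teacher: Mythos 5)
Your proposal is correct and follows essentially the same route as the paper's proof: Taylor expansion in the small parameter $\tfrac{n}{2}-\gamma$ (the paper invokes L'H\^opital for \eqref{eq:left}, which amounts to the same first-order expansion), cancellation of the zeroth- and first-order terms via self-adjointness of $P_\gamma$ and $P_\gamma(1)=Y(n,\gamma)$, and passage to the limit $\dashint_{\mathbb{S}^n} wP_\gamma w\,\ud\mu_{g_0}\to \dashint_{\mathbb{S}^n} wP_{n/2} w\,\ud\mu_{g_0}$ through the spherical harmonics expansion using the smoothness of $w$. Your uniform control of the Taylor remainder under $P_\gamma$ is the same device as the paper's explicit remainder $f$ bounded in $C^{2n}$, so there is no gap.
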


Consequently, we immediately  have

\begin{thm} \label{thm} We can derive the sharp Moser-Trudinger-Onofri inequality \eqref{eq:m-t} from the sharp Sobolev inequality  \eqref{eq:f-sobolev} by sending $\gamma\to \frac{n}{2}$.

\end{thm}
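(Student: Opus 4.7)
The plan is to feed the test function $v=e^{(\frac{n}{2}-\gamma)w}$ into the sharp Sobolev inequality \eqref{eq:f-sobolev}, recast that inequality in the normalized form $LHS_\gamma\le RHS_\gamma$, and then pass to the limit $\gamma\to n/2$ using Proposition \ref{prop:main}. No additional analytic work is required once Proposition \ref{prop:main} is in hand; the whole argument is an algebraic rearrangement plus a limit.

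First I would fix $w\in C^\infty(\Sn)$, set $v=e^{(\frac{n}{2}-\gamma)w}$, and rewrite \eqref{eq:f-sobolev} as
\be\label{eq:shifted}
Y(n,\gamma)\left[\left(\dashint_{\Sn}|v|^{\frac{2n}{n-2\gamma}}\,\ud\mu_{g_0}\right)^{\frac{n-2\gamma}{n}}-\dashint_{\Sn}|v|^2\,\ud\mu_{g_0}\right]\le \dashint_{\Sn}vP_\gamma(v)\,\ud\mu_{g_0}-Y(n,\gamma)\dashint_{\Sn}|v|^2\,\ud\mu_{g_0},
\ee
simply by subtracting $Y(n,\gamma)\dashint_{\Sn}|v|^2\,\ud\mu_{g_0}$ from both sides. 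Multiplying \eqref{eq:shifted} by the positive factor $\frac{4}{(n-2\gamma)^2}$ (valid because $\gamma<n/2$) converts \eqref{eq:shifted} exactly into the assertion $LHS_\gamma\le RHS_\gamma$, for every $\gamma\in(0,n/2)$.

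Next I would let $\gamma\to n/2$. By \eqref{eq:left} and \eqref{eq:right} of Proposition \ref{prop:main}, the inequality $LHS_\gamma\le RHS_\gamma$ passes to
\[
\frac{2(n-1)!}{n}\ln\dashint_{\Sn}e^{n(w-\bar w)}\,\ud\mu_{g_0}\le \dashint_{\Sn}wP_{n/2}w\,\ud\mu_{g_0}.
\]
Since $\ln\dashint_{\Sn}e^{n(w-\bar w)}\,\ud\mu_{g_0}=\ln\dashint_{\Sn}e^{nw}\,\ud\mu_{g_0}-n\bar w$, moving the $-n\bar w$ term to the right-hand side yields
\[
\frac{2(n-1)!}{n}\ln\dashint_{\Sn}e^{nw}\,\ud\mu_{g_0}\le \dashint_{\Sn}\bigl(wP_{n/2}w+2(n-1)!\,w\bigr)\,\ud\mu_{g_0},
\]
which is \eqref{eq:m-t}. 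Because $w$ was arbitrary, this completes the derivation.

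There is really no hard step here: the only thing that could go wrong is the passage to the limit, and that is precisely what Proposition \ref{prop:main} is designed to do. The substantive work lies in the proposition rather than in the theorem itself, so the main obstacle (establishing the two limits \eqref{eq:left} and \eqref{eq:right}, in particular a careful expansion of the multiplier $\frac{1}{(n-2\gamma)^2}$ against the Taylor expansion of $v=e^{(\frac{n}{2}-\gamma)w}$ and of the eigenvalues of $P_\gamma$ in \eqref{eigenvalue} as $\gamma\to n/2$) has already been carried out upstream.
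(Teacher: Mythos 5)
Your proposal is correct and is exactly the argument the paper intends: the paper states the theorem as an immediate consequence of Proposition \ref{prop:main} without writing out details, and your rearrangement of \eqref{eq:f-sobolev} into $LHS_\gamma \le RHS_\gamma$, passage to the limit $\gamma\to n/2$, and final identity $\ln\dashint_{\Sn}e^{n(w-\bar w)}\,\ud\mu_{g_0}=\ln\dashint_{\Sn}e^{nw}\,\ud\mu_{g_0}-n\bar w$ (using $\bar w=\dashint_{\Sn}w\,\ud\mu_{g_0}$) are precisely the routine steps being elided. Nothing further is needed.
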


\begin{proof}[Proof of Proposition \ref{prop:main}]

The proof of \eqref{eq:left} essentially follows from the proof of Lemma 3.1 of  \cite{CW}.
Note that \begin{align*}
&\left( \dashint_{\mathbb{S}^n}e^{nw}\,\ud \mu_{g_0} \right)^{\frac{n-2\gamma}{n}}- \dashint_{\mathbb{S}^n}e^{(n-2\gamma)w}\,\ud \mu_{g_0} \\&
  = \left( \dashint_{\mathbb{S}^n}e^{nw}\,\ud \mu_{g_0} \right)^{\frac{n-2\gamma}{n}}-1- \dashint_{\mathbb{S}^n}(e^{(n-2\gamma)w}-1)\,\ud \mu_{g_0} .
 \end{align*}
 Then by L'H\^opital's rule
 \begin{align*}
 \lim_{\gamma\to n/2} LHS_\gamma &=2\Gamma(n)  \left(\frac{1}{n} \ln  \dashint_{\mathbb{S}^n}e^{nw}\,\ud \mu_{g_0}  - \dashint_{\mathbb{S}^n} w\,\ud \mu_{g_0}  \right) \\&
 =\frac{2(n-1)!}{n} \ln \dashint_{\mathbb{S}^n} e^{n(w-\bar w)} \,\ud \mu_{g_0} .
 \end{align*}
 Therefore, \eqref{eq:left} is proved.

To prove \eqref{eq:right}, using the Taylor expansion of the exponential function, we write
\[
v= e^{\frac{n-2\gamma}{2} w }= 1+(\frac{n}{2}-\gamma) w+ (n-2\gamma)^2 f,
\] where $f=\frac{1}{8} w^2 \int_0^1(1-s)e^{\frac{n-2\gamma}{2} w s}\,\ud s\in C^\infty(\Sn)$ is uniformly bounded in $C^{2n}$ norm as $\gamma \to n/2$.
Then we see that
\begin{align*}
&\dashint_{\mathbb{S}^n}vP_{\gamma}(v)\,\ud \mu_{g_0}  \\&=\dashint_{\mathbb{S}^n} \Big(1+(\frac{n}{2}-\gamma) w+ (n-2\gamma)^2 f\Big)\Big(P_\gamma(1)+(\frac{n}{2}-\gamma) P_\gamma(w)+ (n-2\gamma)^2 P_\gamma(f)\Big)\,\ud \mu_{g_0} \\&
=\dashint_{\mathbb{S}^n} \Big(Y(n,\gamma)+(n-2\gamma) Y(n,\gamma) w +2(n-2\gamma)^2 Y(n,\gamma) f +(\frac{n}{2}-\gamma)^2wP_\gamma w)\,\ud \mu_{g_0} \\&\quad +O((n-2\gamma)^3\Big),
\end{align*}
where we have used the self-adjointness  of $P_\gamma$ and $P_\gamma(1)=Y(n,\gamma)$. We also see that
\begin{align*}
&Y(n, \gamma)\dashint_{\mathbb{S}^n}|v|^{2}\,\ud \mu_{g_0} \\& = Y(n, \gamma)\dashint_{\mathbb{S}^n}\Big(1+(n-2\gamma)w+2(n-2\gamma)^2 f +(\frac{n}{2}-\gamma)^2 w^2 +O((n-2\gamma)^3)\Big)\,\ud \mu_{g_0} .
\end{align*}
It follows that
\begin{align*}
&\dashint_{\mathbb{S}^n}vP_{\gamma}(v)\,\ud \mu_{g_0} -  Y(n, \gamma)\dashint_{\mathbb{S}^n}|v|^{2}\,\ud \mu_{g_0}  \\&
=(\frac{n}{2}-\gamma)^2 \dashint_{\mathbb{S}^n} wP_\gamma w  \,\ud \mu_{g_0}  +O((n-2\gamma)^3).
\end{align*}
Let $w=\sum_{k=0}^\infty Y^{(k)}$, where $Y^{(k)}$ are spherical harmonics of degree $k$. Hence,
\begin{align*}
\dashint_{\mathbb{S}^n} wP_\gamma w  \,\ud \mu_{g_0}
&=\sum_{k=0}^\infty  \frac{\Gamma(k+\frac{n}{2}+\gamma)}{\Gamma(k+\frac{n}{2}-\gamma)} \dashint_{\mathbb{S}^n}|Y^{(k)}|^2  \,\ud \mu_{g_0}
\\&
\to \sum_{k=1}^\infty  \frac{\Gamma(k+n)}{\Gamma(k)}\dashint_{\mathbb{S}^n}|Y^{(k)}|^2  \,\ud \mu_{g_0}  =\dashint_{\mathbb{S}^n} wP_{n/2}w \,\ud \mu_{g_0}
\end{align*}
as $\gamma \to \frac{n}{2}$, where we have used \eqref{eigenvalue} in the first identity, the definition of $P_{n/2}$ in the second one and have used the smoothness of $w$ to ensure the convergence.  Therefore, \eqref{eq:right} follows.

Proposition \ref{prop:main} is proved.  \end{proof}

\section{The CR spheres setting}

Following T. Branson, L. Fontana and C. Morpurgo \cite{BFM},  we let  $\mathcal{H}_{j,k}$ be the space of harmonic polynomials of bidegree $(j,k)$ on CR sphere $\mathbb{S}^{2n+1}$, $j,k=0,1,\dots$; such spaces make up for the standard decomposition of $L^2$ into $U(n+1)$-invariant and irreducible subspaces, where $n\ge 1$. For $0<d<Q:=2n+2$, let $\mathcal{A}_{d}$ be the intertwining operator of order $d$ on CR sphere $\mathbb{S}^{2n+1}$,  characterized  by
\be
\mathcal{A}_d Y^{(j,k)}= \lda_j(d)\lda_k (d) Y^{(j,k)}, \quad \lda_j(d)=\frac{\Gamma(j+\frac{Q+d}{4})}{\Gamma(j+\frac{Q-d}{4})}
\ee
for every $Y^{(j,k)}\in \mathcal{H}_{j,k} $.  When $d=2$, it gives  the CR invariant sub-Laplacian, see D. Jerison and J.M. Lee \cite{JL}. 

One can define the operator $\mathcal{A}_{Q}  :=\lim_{d\to Q}\mathcal{A}_{d}$. The kernel of this operator is
the space of CR-pluriharmonic functions on   $\mathbb{S}^{2n+1}$ given by 
\[
\mathcal{P}:= \bigoplus_{j>0}(\mathcal{H}_{j,0} \bigoplus \mathcal{H}_{0,j}) \bigoplus \mathcal{H}_{0,0}.
 \]
It was discussed in \cite{BFM} that $\mathcal{A}_Q$ is not a suitable operator which could be used to conclude a conformally invariant Moser-Trudinger-Onofri inequality. In  \cite{BFM}, the authors defined the operator $\mathcal{A}_{Q}'$ acting on the CR-pluriharmonic functions  with
\be \label{eq:A'}
\mathcal{A}_{Q}' F= \prod_{\ell=0}^n(\frac{2}{n}\mathcal{L}+ \ell) F=\lim_{d\to Q}\frac{1}{\lda_0(d)} \mathcal{A}_d F, \quad \forall~ F\in C^\infty(\mathbb{S}^{2n+1}) \cap \mathcal{P},
\ee
where $\mathcal{L}=\mathcal{A}_2-\frac{n^2}{4}$ is the sub-Laplacian operator satisfying
\[
\mathcal{L} Y^{(j,k)}=(jk+\frac{n}{2}j +\frac{n}{2}k) Y^{(j,k)} \quad \mbox{for all }j,k\ge 0.
\]   The limit in the second equality of \eqref{eq:A'} is uniform, see Proposition 1.2 of \cite{BFM}. The sharp Moser-Trudinger-Onofri inequality on CR $\mathbb{S}^{2n+1}$ proved by \cite{BFM} asserts that
\be\label{eq:2-1}
\frac{n!}{Q}\ln \dashint_{\mathbb{S}^{2n+1}} e^{QF} \le \dashint_{\mathbb{S}^{2n+1}} F \mathcal{A}'_Q F + n! \dashint_{\mathbb{S}^{2n+1}} F \quad \mbox{for}~ F\in C^\infty(\mathbb{S}^{2n+1}) \cap \mathcal{P}.
\ee
(It is called Beckner-Onofri inequality in \cite{BFM}.) By duality,  the sharp Hardy-Littlewood-Sobolev inequality on CR $\mathbb{S}^{2n+1}$ due to R. Frank and E. Lieb \cite{FL} yields that
\be\label{eq:2-2}
\lda_0(d)^2  \left(\dashint_{\mathbb{S}^{2n+1} }|v|^{\frac{2Q}{Q-d}}\right)^{\frac{Q-d}{Q}}\leq
\dashint_{\mathbb{S}^{2n+1}}v\mathcal{A}_{d}(v)\quad \mbox{for }v\in C^\infty(\mathbb{S}^{2n+1}).
\ee

\begin{prop}\label{prop:main'} For any $F\in C^{\infty}(\mathbb{S}^{2n+1})\cap \mathcal{P}$, let $v= e^{\frac{Q-d}{2} F}$. Denote
\[
LHS_d:=  \frac{4}{(Q-d)^2} \lda_0(d)  \left[ \left( \dashint_{\mathbb{S}^{2n+1}}|v|^{\frac{2Q}{Q-d}}\right)^{\frac{Q-d}{Q}}- \dashint_{\mathbb{S}^{2n+1}}|v|^{2} \right]
\]
and
\[
RHS_d :=  \frac{4}{(Q-d)^2} \lda_0(d)^{-1} \left[ \dashint_{\mathbb{S}^{2n+1}}v\mathcal{A}_{d}(v)-  \lda_0(d)^2 \dashint_{\mathbb{S}^{2n+1}}|v|^{2}  \right].
\]
Then
\be  \label{eq:left-1}
\lim_{d\to Q} LHS_d= \frac{n!}{Q}  \ln \dashint_{\mathbb{S}^{2n+1}} e^{Q(F-\bar F)}
\ee
and
\be \label{eq:right-1}
\lim_{d\to Q} RHS_d= \dashint_{\mathbb{S}^{2n+1}} F \mathcal{A}'_{Q} F ,
\ee
where $\bar F$ is the average of $F$ over $\mathbb{S}^{2n+1}$.

\end{prop}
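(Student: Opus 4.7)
The plan is to mimic the proof of Proposition \ref{prop:main} on $\Sn$, with one structural change dictated by the defining limit $\mathcal{A}_Q' F = \lim_{d\to Q}\lda_0(d)^{-1}\mathcal{A}_d F$ for $F\in\mathcal{P}$. In the Riemannian argument the vanishing prefactor $Y(n,\gamma)$ multiplies the LHS while the RHS is left as is; here the analogous vanishing factor is $\lda_0(d)$, but because $\mathcal{A}_Q'$ is the \emph{normalized} limit of $\mathcal{A}_d$, the RHS must be multiplied by $\lda_0(d)^{-1}$. The whole analysis rides on the asymptotic
\[
\lda_0(d)=\frac{\Gamma((Q+d)/4)}{\Gamma((Q-d)/4)}\sim \frac{(Q-d)\,n!}{4}\quad\text{as }d\to Q,
\]
which follows from $\Gamma(x)\sim 1/x$ at $x=0$ together with $\Gamma(Q/2)=n!$.

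For \eqref{eq:left-1}, I would observe that $|v|^{2Q/(Q-d)}=e^{QF}$ and $|v|^2=e^{(Q-d)F}$, so the bracket in $LHS_d$ is identical to the quantity treated in the derivation of \eqref{eq:left}, only with $\lda_0(d)$ playing the role of $Y(n,\gamma)$. Rewriting it as $\bigl[(\dashint e^{QF})^{(Q-d)/Q}-1\bigr]-\dashint(e^{(Q-d)F}-1)$ and applying L'H\^opital's rule in the parameter $Q-d$ produces $(Q-d)\bigl(\tfrac{1}{Q}\ln\dashint e^{QF}-\dashint F\bigr)+o(Q-d)$; multiplying by $4\lda_0(d)/(Q-d)^2\sim n!/(Q-d)$ then yields $\tfrac{n!}{Q}\ln\dashint e^{Q(F-\bar F)}$.

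For \eqref{eq:right-1}, I would Taylor expand $v=1+\tfrac{Q-d}{2}F+(Q-d)^2 g$ with $g=\tfrac{1}{8}F^2\int_0^1(1-s)e^{\frac{Q-d}{2}Fs}\,\ud s$ bounded in every $C^N$ norm as $d\to Q$, and expand $\dashint v\mathcal{A}_d v$ and $\lda_0(d)^2\dashint|v|^2$ in powers of $Q-d$. Using self-adjointness of $\mathcal{A}_d$ and the identity $\mathcal{A}_d(1)=\lda_0(d)^2$, the zeroth- and first-order contributions cancel between the two expansions, leaving
\[
\dashint v\mathcal{A}_d v-\lda_0(d)^2\dashint|v|^2=\tfrac{(Q-d)^2}{4}\bigl[\dashint F\mathcal{A}_d F-\lda_0(d)^2\dashint F^2\bigr]+\mathrm{err}.
\]
Decomposing $F\in\mathcal{P}$ via $\mathcal{H}_{0,0}\oplus\bigoplus_{j\geq 1}(\mathcal{H}_{j,0}\oplus\mathcal{H}_{0,j})$ and using the eigenvalues of $\mathcal{A}_d$, the bracket reduces to $\lda_0(d)\sum_{j\geq 1}(\lda_j(d)-\lda_0(d))[\dashint|Y^{(j,0)}|^2+\dashint|Y^{(0,j)}|^2]$, which after division by $\lda_0(d)$ converges to $\dashint F\mathcal{A}_Q' F$ by the uniform convergence $\lda_0(d)^{-1}\mathcal{A}_d F\to\mathcal{A}_Q' F$ from Proposition 1.2 of \cite{BFM} and the smoothness of $F$. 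Multiplying by $4\lda_0(d)^{-1}/(Q-d)^2$ then gives \eqref{eq:right-1}.

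The main technical step is controlling $\mathrm{err}$ in the preceding display, because both $Q-d$ and $\lda_0(d)$ are small of the same order, so the leading term has size $\lda_0(d)(Q-d)^2\asymp(Q-d)^3$ and is not much larger than an a priori $O((Q-d)^3)$ cross-term. The delicate contribution is the cubic cross-term $(Q-d)^3\dashint F\mathcal{A}_d g$; I would dispose of it by self-adjointness, rewriting $\dashint F\mathcal{A}_d g=\dashint g\mathcal{A}_d F$ and then invoking $\mathcal{A}_d F=\lda_0(d)[\mathcal{A}_Q' F+o(1)]$ uniformly (valid because $F\in\mathcal{P}$) to conclude this integral is $O(\lda_0(d))$ and the whole term is $O((Q-d)^4)$. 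The quartic remainder $(Q-d)^4\dashint g\mathcal{A}_d g$ is handled by uniform $L^\infty$ bounds on $\mathcal{A}_d g$ for $d$ in a neighborhood of $Q$. Apart from this bookkeeping, the argument is strictly parallel to that of Proposition \ref{prop:main}.
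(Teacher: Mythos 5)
Your proposal is correct and takes essentially the same route as the paper's proof: the identical rewriting plus L'H\^opital step for \eqref{eq:left-1} (using $\lda_0(d)\sim (Q-d)\,n!/4$), and for \eqref{eq:right-1} the same Taylor expansion $v=1+\tfrac{Q-d}{2}F+(Q-d)^2f$, self-adjointness together with $\mathcal{A}_d(1)=\lda_0(d)^2$, and the uniform limit $\lda_0(d)^{-1}\mathcal{A}_dF\to \mathcal{A}_Q'F$ from \eqref{eq:A'} to dispose of the cubic cross-term. Your additional spectral decomposition of $F$ over $\mathcal{H}_{0,0}\oplus\bigoplus_{j\ge 1}(\mathcal{H}_{j,0}\oplus\mathcal{H}_{0,j})$ and the explicit $O((Q-d)^4)$ bookkeeping (correctly noting that, unlike the $\Sn$ case, $O((Q-d)^3)$ errors would not suffice because of the extra factor $\lda_0(d)^{-1}$) only make explicit details the paper leaves implicit, e.g.\ absorbing $\lda_0(d)^2\dashint F^2$ into the $O((Q-d)^4)$ remainder, so the two arguments coincide in substance.
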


\begin{proof} Note that \begin{align*}
 &   \left(\dashint_{\mathbb{S}^{2n+1}}|v|^{\frac{2Q}{Q-d}}\right)^{\frac{Q-d}{Q}}- \dashint_{\mathbb{S}^{2n+1}}|v|^{2} \\
  &= \left(\dashint_{\mathbb{S}^{2n+1}}e^{QF}\right)^{\frac{Q-d}{Q}}-1- \dashint_{\mathbb{S}^{2n+1}}(e^{(Q-d)F}-1). \\&
 \end{align*}
 Then by L'H\^opital's rule
 \begin{align*}
 \lim_{d\to n/2} LHS_d &=\Gamma(n+1)  \left(\frac{1}{Q} \ln  \dashint_{\mathbb{S}^{2n+1}}e^{QF} - \dashint_{\mathbb{S}^{2n+1}} F \right) \\&
 =\frac{n!}{Q} \ln \dashint_{\mathbb{S}^{2n+1}} e^{Q(F-\bar F)}.
 \end{align*}
 Therefore, \eqref{eq:left-1} is proved.

To prove \eqref{eq:right-1}, using the Taylor expansion of the exponential function, we write
\[
v=e^{\frac{Q-d}{2} F}= 1+\frac12(Q-d) F+ (Q-d)^2 f,
\] where $f=\frac{1}{8} F^2 \int_0^1(1-s)e^{\frac{Q-d}{2} F s}\,\ud s\in C^\infty(\mathbb{S}^{2n+1})$ is uniformly bounded in $C^{4n}$ norm  as $d \to Q$.
Then we see that
\begin{align*}
&\dashint_{\mathbb{S}^{2n+1}}v\mathcal{A}_d(v) \\&=\dashint_{\mathbb{S}^{2n+1}} (1+\frac12(Q-d) F+ (Q-d)^2 f)(\mathcal{A}_d(1)+\frac12(Q-d) \mathcal{A}_d(F)+ (Q-d)^2 \mathcal{A}_d(f))\\&
=\dashint_{\mathbb{S}^{2n+1}}\Big (\lda_0(d)^2+(Q-d) \lda_0(d)^2 F +2(Q-d)^2 \lda_0(d)^2 f \\&\quad  +\frac14(Q-d)^2F\mathcal{A}_d F + (Q-d)^3 f \mathcal{A}_d F\Big) +O((Q-d)^4),
\end{align*}
where we have used the self-adjointness  of $\mathcal{A}_d$ and $\mathcal{A}_d(1)=\lda_0(d)^2$. We also see that
\begin{align*}
&\lda_0(d)^2\dashint_{\mathbb{S}^{2n+1}}|v|^{2}\\& = \lda_0(d)^2\dashint_{\mathbb{S}^{2n+1}}\Big(1+(Q-d)F+2(Q-d)^2 f +\frac12(Q-d)^2 F^2 +O((Q-d)^3)\Big).
\end{align*}
It follows that
\begin{align*}
&\dashint_{\mathbb{S}^{2n+1}}v\mathcal{A}_d(v)-  \lda_0(d)^2\dashint_{\mathbb{S}^{2n+1}}|v|^{2} \\&
=\frac14(Q-d)^2  \dashint_{\mathbb{S}^{2n+1}} (F\mathcal{A}_d F+ 4(Q-d)f \mathcal{A}_d F)  +O((Q-d)^4).
\end{align*}
By the second equality of \eqref{eq:A'}, \eqref{eq:right-1} follows immediately. 

Therefore, Proposition \ref{prop:main'} is proved.

\end{proof}

Similarly, we immediately  obtain

\begin{thm} \label{thm'} We can derive the sharp Moser-Trudinger-Onofri inequality \eqref{eq:2-1} from the sharp Sobolev inequality  \eqref{eq:2-2} by sending $d\to Q$.

\end{thm}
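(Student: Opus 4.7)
The plan is to simply apply Proposition \ref{prop:main'} to the sharp fractional Sobolev inequality \eqref{eq:2-2}. Let $F \in C^\infty(\mathbb{S}^{2n+1}) \cap \mathcal{P}$ and set $v = e^{(Q-d)F/2}$. Since $v \in C^\infty(\mathbb{S}^{2n+1})$, inequality \eqref{eq:2-2} applies to this particular $v$, giving
\[
\lda_0(d)^2 \left(\dashint_{\mathbb{S}^{2n+1}}|v|^{\frac{2Q}{Q-d}}\right)^{\frac{Q-d}{Q}} \le \dashint_{\mathbb{S}^{2n+1}} v \mathcal{A}_d(v).
\]

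I would then subtract $\lda_0(d)^2 \dashint_{\mathbb{S}^{2n+1}} |v|^2$ from both sides, divide by $\lda_0(d) > 0$, and multiply by $\tfrac{4}{(Q-d)^2}$. The resulting inequality is precisely
\[
LHS_d \le RHS_d
\]
in the notation of Proposition \ref{prop:main'}, which holds for all $d \in (0,Q)$ and $F \in C^\infty(\mathbb{S}^{2n+1}) \cap \mathcal{P}$.

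Now I would pass to the limit $d \to Q$. By \eqref{eq:left-1} and \eqref{eq:right-1} of Proposition \ref{prop:main'}, the inequality becomes
\[
\frac{n!}{Q} \ln \dashint_{\mathbb{S}^{2n+1}} e^{Q(F-\bar F)} \le \dashint_{\mathbb{S}^{2n+1}} F \mathcal{A}'_Q F.
\]
Finally, using the identity $\ln \dashint_{\mathbb{S}^{2n+1}} e^{Q(F-\bar F)} = \ln \dashint_{\mathbb{S}^{2n+1}} e^{QF} - Q \bar F$ and noting $\bar F = \dashint_{\mathbb{S}^{2n+1}} F$, I rearrange to obtain
\[
\frac{n!}{Q} \ln \dashint_{\mathbb{S}^{2n+1}} e^{QF} \le \dashint_{\mathbb{S}^{2n+1}} F \mathcal{A}'_Q F + n! \dashint_{\mathbb{S}^{2n+1}} F,
\]
which is exactly \eqref{eq:2-1}.

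There is no substantial obstacle here since Proposition \ref{prop:main'} already carries the entire analytic burden (the L'H\^opital computation on the geometric side and the $O((Q-d)^4)$ Taylor expansion plus uniform convergence of $\lda_0(d)^{-1} \mathcal{A}_d F \to \mathcal{A}_Q' F$ on the Dirichlet side). The only point requiring care is the algebraic identity that converts $\ln \dashint e^{Q(F-\bar F)}$ into the form appearing in \eqref{eq:2-1}; this accounts for the linear term $n! \dashint F$ on the right-hand side.
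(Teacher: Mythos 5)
Your proposal is correct and follows exactly the route the paper intends: the paper presents Theorem \ref{thm'} as an immediate consequence of Proposition \ref{prop:main'}, and your argument simply makes explicit the routine steps (applying \eqref{eq:2-2} to $v=e^{(Q-d)F/2}$, rearranging to $LHS_d\le RHS_d$, passing to the limit $d\to Q$, and using $\ln \dashint_{\mathbb{S}^{2n+1}} e^{Q(F-\bar F)}=\ln \dashint_{\mathbb{S}^{2n+1}} e^{QF}-Q\bar F$ to recover the linear term $n!\dashint_{\mathbb{S}^{2n+1}} F$). No gaps; this is the same proof, spelled out.
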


\small

\bigskip

\noindent School of Mathematical sciences, Beijing Normal University\\
Beijing 100875, China\\[1mm]
Email: \textsf{jx@bnu.edu.cn}

\end{document}